\newtheorem{theorem}{Theorem}
\newtheorem{lemma}[theorem]{Lemma}
\numberwithin{subcase}{case}
\newcommand{\ex}{\textrm{\upshape{ex}}}
\newcommand{\gal}{\textrm{\upshape{Gal}}}
\title{A note on projective norm graphs}
\author{Codru\unichar{355} Grosu\thanks{This research was supported by the Deutsche Forschungsgemeinschaft within the research training group `Methods for Discrete Structures' (GRK 1408).}}
\affil{\small{grosu.codrut@gmail.com}}
\date{}
\begin{document}

\maketitle

\abstract{The projective norm graphs $\mathcal{P}(q, 4)$ introduced by Alon, R\'onyai and Szab\'o are explicit examples of extremal graphs not containing $K_{4, 7}$. Ball and Pepe showed that $\mathcal{P}(q, 4)$ does not contain a copy of $K_{5, 5}$ either for $q \geq 7$, asymptotically improving the best lower bound for $\ex(n, K_{5, 5})$.

We show that these results can not be improved, in the sense that $\mathcal{P}(q, 4)$ contains a copy of $K_{4, 6}$ for infinitely many primes $q$.
}

\section{\normalsize Introduction}

Let $H$ be any graph. For any $n \geq 1$, the Tur\'an function $\ex(n, H)$ is defined as the maximum possible number of edges of an $H$-free graph on $n$ vertices. Although the precise value of the Tur\'an function is known in several cases, for arbitrary graphs $H$ the best result we have is an asymptotic estimate provided by the Erd\H os-Stone-Simonovits theorem:
\begin{equation}
\label{eq:ErdosStone}
\ex(n, H) = \left(1 - \frac{1}{\chi(H) - 1} + o(1)\right)\binom{n}{2},
\end{equation}
where $\chi(H)$ is the chromatic number of $H$. This estimate is particularly poor when $H$ is bipartite ($\chi(H) = 2$), as then it only tells us that $\ex(n, H) = o(n^2)$. Therefore it makes sense to try to obtain better estimates for bipartite graphs.

Even the case of complete bipartite graphs $K_{t, s}$ is not fully solved. K{\H o}v{\'a}ri, S\'os and Tur\'an  (\cite{Kovari54}) showed that $\ex(n, K_{t, s}) \leq c_{t, s} n^{2 - \frac{1}{t}}$ for any $s \geq t$, where $c_{t, s} > 0$ is a constant depending only on $t$ and $s$. While the upper bound is believed to be tight, the best general lower bound, obtained by the probabilistic method (\cite{Erdos74}), only gives
\begin{equation}
\label{eq:lowerBound}
\ex(n, K_{t, s}) \geq cn^{2-\frac{s+t-2}{st-1}}.
\end{equation}
Nevertheless, the upper bound is tight for $t=2$, as shown by Erd\H os, R\'enyi and S\'os (\cite{Erdos66}), and $t = 3$, as shown by Brown (\cite{Brown66}). On the other hand, the asymptotic behaviour of $\ex(n, K_{t, t})$ is not known for any $t \geq 4$.

A significant step towards a solution to this problem was made by Koll\'ar, R\'onyai and Szab\'o (\cite{Szabo95}), who constructed the norm graphs $\mathcal{G}(q, t)$, a family of graphs which are extremal for $K_{t, s}$, for any $s \geq t!+1$. This construction was further refined by Alon, R\'onyai and Szab\'o (\cite{Szabo99}), who introduced the projective norm graphs $\mathcal{P}(q, t)$, graphs which are extremal for $K_{t, s}$, for any $s \geq (t-1)!+1$.

The construction of $\mathcal{P}(q, t)$ is as follows. Let $t \geq 3$ and $q$ be a prime power. We let $\mathbb{F}_{q}$ be the finite field with $q$ elements.

We define $\mathcal{P}(q, t)$ as the graph on vertex set $\mathbb{F}_{q^{t-1}} \times \mathbb{F}_q^*$, with an edge between two vertices $(\alpha, a)$ and $(\beta, b)$ if and only if $N(\alpha+\beta) = ab$. Here $N(x)$ denotes the norm of the element $x$ relative to the extension $\mathbb{F}_{q^{t-1}}/\mathbb{F}_q$. Then the following holds.

\begin{theorem}[\cite{Szabo99}]
\label{thm:szabo}
Any $t$ distinct vertices in $\mathcal{P}(q, t)$ have at most $(t-1)!$ common neighbors.
\end{theorem}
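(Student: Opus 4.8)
The plan is to translate the property of being a common neighbor into a system of norm equations, and then to count its solutions by decoupling the Frobenius conjugates into independent variables and invoking a multihomogeneous Bézout bound.

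Set $m := t-1$, so that $N(x) = \prod_{j=0}^{m-1} x^{q^j}$ for $x \in \mathbb{F}_{q^{m}}$, and let $(\alpha_1, a_1), \ldots, (\alpha_t, a_t)$ be the $t$ given vertices. A vertex $(\beta, b)$ is a common neighbor precisely when $N(\beta + \alpha_i) = a_i b$ for every $i$. If two of the $\alpha_i$ coincide, say $\alpha_i = \alpha_j$ with $i \ne j$, then distinctness of the vertices forces $a_i \ne a_j$, and $a_i b = N(\beta+\alpha_i) = N(\beta + \alpha_j) = a_j b$ gives $b = 0$, a contradiction; hence in this case there are no common neighbors and we are done. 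So I would assume the $\alpha_i$ pairwise distinct. Since $a_i, b \in \mathbb{F}_q^*$ all the norms are nonzero (in particular $\beta \ne -\alpha_1$), so dividing the $i$-th equation by the first eliminates $b$, and it remains to bound the number of $\beta \in \mathbb{F}_{q^m}$ with
\[
N(\beta + \alpha_i) = d_i\, N(\beta + \alpha_1), \qquad i = 2, \ldots, t,
\]
where $d_i := a_i/a_1 \in \mathbb{F}_q^*$; each common neighbor injects into this set of $\beta$'s, and this is a square system of $m$ equations.

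Next I would decouple the conjugates. Writing each norm as $\prod_{j=0}^{m-1}(\beta^{q^j} + \alpha_i^{q^j})$ and introducing independent variables $Y_0, \ldots, Y_{m-1}$ in place of $\beta^{q^j}$, any genuine $\beta$ yields a solution of the relaxed system with $Y_j = \beta^{q^j}$, and distinct $\beta$ give distinct tuples (already $Y_0 = \beta$ differs); so it suffices to bound the relaxed solutions. Dividing by $\prod_j (Y_j + \alpha_1^{q^j})$, substituting $W_j := (Y_j + \alpha_1^{q^j})^{-1}$, and using $\alpha_i^{q^j} - \alpha_1^{q^j} = (\alpha_i - \alpha_1)^{q^j}$, each equation becomes
\[
\prod_{j=0}^{m-1}\bigl(1 + (\alpha_i - \alpha_1)^{q^j}\, W_j\bigr) = d_i, \qquad i = 2, \ldots, t.
\]
This is the crucial gain: the left-hand side is multilinear, of degree exactly one in each of the $m$ variables $W_0, \ldots, W_{m-1}$, whereas in the single variable $\beta$ the original norm equation has enormous degree $(q^m-1)/(q-1)$ and carries no usable information. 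I would then finish by a multihomogeneous Bézout count, regarding each $W_j$ as its own group of variables: every equation has multidegree $(1,\ldots,1)$, and with $m$ equations and $m$ one-dimensional groups the bound on the number of isolated solutions is the coefficient of $H_0 H_1 \cdots H_{m-1}$ in $(H_0 + \cdots + H_{m-1})^m$, namely $m! = (t-1)!$.

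The main obstacle is to guarantee that this Bézout number actually controls the genuine solutions, i.e. that they are isolated points of the relaxed variety rather than sitting on a positive-dimensional component, on which the count says nothing. This is exactly where the distinctness of the $\alpha_i$ must enter, and in a subtler way than the nonvanishing of a single (Moore or Vandermonde) determinant in the $(\alpha_i-\alpha_1)^{q^j}$ — distinctness alone does not make such a determinant nonzero, so the degeneracy has to be excluded by a finer analysis. Concretely I would make this rigorous by eliminating $W_1, \ldots, W_{m-1}$ to produce a univariate resultant in $W_0$ of degree at most $m!$ and arguing that it is not identically zero, so that its roots include every admissible value $W_0 = (\beta + \alpha_1)^{-1}$; verifying that this eliminant does not degenerate is the technical heart of the argument.
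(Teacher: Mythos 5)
Note first that the paper you were given does not prove this theorem at all: it is quoted from Alon--R\'onyai--Szab\'o \cite{Szabo99}, so the relevant comparison is with the proof there, whose engine is the Koll\'ar--R\'onyai--Szab\'o lemma on systems of the form $\prod_j (x_j - a_{ij}) = b_i$. Your reduction is exactly theirs: ruling out repeated $\alpha_i$'s, eliminating $b$ by dividing by the first equation, passing to the reciprocal variables, decoupling the Frobenius conjugates, and arriving at the multilinear system $\prod_{j=0}^{m-1}\bigl(1 + c_i^{q^j} W_j\bigr) = d_i$ for $i = 2,\dots,t$, with $c_i := \alpha_i - \alpha_1$ nonzero and pairwise distinct; and your multihomogeneous B\'ezout number $m! = (t-1)!$ is computed correctly. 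So the approach is the right one, but it is incomplete at precisely the point you flag yourself.

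The gap is that you never establish that the solution set is finite (equivalently, that the genuine solutions are isolated), and the eliminant route you sketch cannot close it: if the variety had a positive-dimensional component through a genuine solution, the univariate resultant you propose would vanish identically, so ``arguing that it is not identically zero'' is not a technical verification but the whole problem restated (and the degree bound $m!$ for that eliminant is also unjustified). The way \cite{Szabo99} (following Koll\'ar--R\'onyai--Szab\'o) closes this gap is not by resultants or determinants but by compactness. Homogenize each equation in $(\mathbb{P}^1)^m$ with coordinates $[x_j : y_j]$, so it reads $\prod_j \bigl(y_j + c_i^{q^j} x_j\bigr) = d_i \prod_j y_j$. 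At a point with $y_{j_0} = 0$ the right-hand side vanishes, so each equation $i$ must have a vanishing factor; this cannot happen at an ``infinite'' coordinate, where the factor equals $c_i^{q^j} x_j \neq 0$, so each of the $t-1$ equations forces some \emph{finite} coordinate to satisfy $W_j = -c_i^{-q^j}$. Since at most $t-2$ coordinates are finite, the pigeonhole principle yields $i \neq i'$ and $j$ with $c_i^{q^j} = c_{i'}^{q^j}$, hence $c_i = c_{i'}$ by injectivity of the Frobenius map, contradicting the distinctness of the $\alpha_i$. Therefore the multiprojective closure has no points at infinity; being complete and contained in an affine chart, it is finite, and only then does your B\'ezout count of $(t-1)!$ legitimately bound the number of solutions. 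This is exactly where distinctness enters --- via pigeonhole at infinity --- which confirms your (correct) suspicion that no Moore or Vandermonde determinant condition would do the job, but it is an idea your proposal is missing rather than a routine verification.
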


Later several other families of graphs avoiding large bipartite subgraphs were constructed (\cite{Bukh13}, \cite{Bukh15}). However, none of these constructions matches (or improves) the bound $s \geq (t-1)!+1$, and in fact to this day the projective norm graphs remain the best construction we have: for no $s \leq (t-1)!$ and $t$ large enough do we know an example of a graph improving \eqref{eq:lowerBound}.

For $t=4$, $\mathcal{P}(q, 4)$ avoids $K_{4, 7}$. It was shown by Ball and Pepe (\cite{Ball12}) that for $q \geq 7$, $\mathcal{P}(q, 4)$ further avoids $K_{5, 5}$, thus improving the lower bound on $\ex(n, K_{5, 5})$ to $\frac{1}{2}(1+o(1))n^{7/4}$. As the best lower bound on $\ex(n, K_{4, 4})$ is
\begin{equation*}
\ex(n, K_{4, 4}) \geq \frac{1}{2}n^{5/3} + o(n^{5/3}),
\end{equation*}
following from Brown's optimal construction for $K_{3, 3}$, it would be desirable to know whether $\mathcal{P}(q, 4)$ further avoids any of the subgraphs $K_{4, 4}, K_{4, 5}$ or $K_{4, 6}$. The main result of this note is the following.

\begin{theorem}
\label{thm:main}
There exists an infinite sequence of primes of density $\frac{1}{9}$ such that for any prime $p$ in this sequence, $\mathcal{P}(p, 4)$ contains a copy of $K_{4, 6}$. 
\end{theorem}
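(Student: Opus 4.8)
The plan is to exhibit a $K_{4,6}$ as four vertices possessing exactly six common neighbours, six being the maximum permitted by Theorem~\ref{thm:szabo} for $\mathcal{P}(q,4)$. Translating the whole graph by a fixed vertex, I may assume the four vertices are $(0,1),(\alpha_2,a_2),(\alpha_3,a_3),(\alpha_4,a_4)$; then $(\gamma,b)$ is a common neighbour exactly when $b=N(\gamma)$ and $N(\gamma+\alpha_i)=a_iN(\gamma)$ for $i=2,3,4$. Writing $N(\gamma+\alpha)=(\gamma+\alpha)(\gamma^{q}+\alpha^{q})(\gamma^{q^{2}}+\alpha^{q^{2}})$ and introducing independent coordinates $(\beta_0,\beta_1,\beta_2)$ in place of $(\gamma,\gamma^{q},\gamma^{q^{2}})$, the three conditions $(\beta_0+\alpha_i)(\beta_1+\alpha_i^{q})(\beta_2+\alpha_i^{q^{2}})=a_i\beta_0\beta_1\beta_2$ form a system of three equations of multidegree $(1,1,1)$ on $(\mathbb{P}^{1})^{3}$. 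The multihomogeneous B\'ezout bound gives at most $3!=6$ solutions over $\overline{\mathbb{F}}_q$ (this already recovers Theorem~\ref{thm:szabo}), and such a solution is an honest common neighbour precisely when it is \emph{diagonal}, i.e. of the form $(\gamma,\gamma^{q},\gamma^{q^{2}})$ with $\gamma\in\mathbb{F}_{q^{3}}^{*}$. One checks that the map $\psi\colon(\beta_0,\beta_1,\beta_2)\mapsto(\beta_2^{q},\beta_0^{q},\beta_1^{q})$ permutes the six solutions and that its fixed points are exactly the diagonal ones. Thus the task reduces to finding configurations for which $\psi$ acts as the identity on all six solutions.

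To control $\psi$ arithmetically I would take the data to be reductions of fixed algebraic numbers (we take $q=p$ prime throughout, matching the statement): choose a cubic number field $F/\mathbb{Q}$, elements $\alpha_i\in\mathcal{O}_F$ and integers $a_i$, and restrict to primes $p$ inert in $F$, so that $\mathcal{O}_F/p\mathcal{O}_F\cong\mathbb{F}_{p^{3}}$, the $\alpha_i$ reduce to genuine elements of $\mathbb{F}_{p^{3}}$, and $x\mapsto x^{p}$ is the canonical generator of $\gal(\mathbb{F}_{p^{3}}/\mathbb{F}_{p})$. The trilinear system is then the reduction of a fixed $0$-dimensional scheme defined over $F$; let $M$ be the Galois closure over $\mathbb{Q}$ of the field generated by its six solutions, and set $G=\gal(M/\mathbb{Q})$. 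For all large unramified $p$ the six solutions modulo $p$ are the reductions of the six characteristic-zero solutions, and Frobenius permutes them according to the Frobenius class of $p$ in $G$. Unwinding the definition of $\psi$, the condition ``$\psi=\mathrm{id}$'' becomes the requirement that the Frobenius class of $p$ lie in an explicit subset $C\subseteq G$, namely those elements that restrict to the chosen generator on $F$ and fix every solution up to the induced cyclic relabelling of coordinates.

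Finally I would select $\alpha_i,a_i$ so that $|C|/|G|=\tfrac19$ and invoke the Chebotarev density theorem, producing an infinite set of primes of density $\tfrac19$ for which all six solutions are diagonal, i.e. $\mathcal{P}(p,4)$ contains the required $K_{4,6}$. I expect the density $\tfrac19=\tfrac13\cdot\tfrac13$ to appear as the product of the condition that $p$ be inert in a \emph{non-Galois} cubic field $F$ (density $\tfrac13$) with a further cubic splitting condition forcing the solutions to be diagonal (conditional density $\tfrac13$); taking $F$ non-Galois is precisely what supplies the second independent factor $\tfrac13$. The main obstacle is twofold. First, the bookkeeping that converts the characteristic-$p$ twist $\psi$ into a subset $C\subseteq G$ of density \emph{exactly} $\tfrac19$ requires pinning down the Galois action on the solution scheme, and this is where the explicit choice of $\alpha_i,a_i$ is delicate. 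Second, one must check non-degeneracy uniformly for all large primes in the family: the six solutions must be distinct and affine (no solutions at infinity, and $\gamma\neq0$), they must yield six distinct vertices, and the four chosen vertices must themselves be distinct, so that one obtains a genuine $K_{4,6}$ rather than merely a weighted count of six intersection points.
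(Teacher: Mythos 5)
Your overall strategy---encode the common neighbours of four prescribed vertices as the solutions of a zero-dimensional system of norm equations defined over a number field, and use Chebotarev's density theorem to find a positive density of primes at which all six solutions become ``diagonal''---is in spirit the same as the paper's, and your technical framing is sound: the multihomogeneous B\'ezout count of $3!=6$ on $(\mathbb{P}^1)^3$, the twisted map $\psi\colon(\beta_0,\beta_1,\beta_2)\mapsto(\beta_2^q,\beta_0^q,\beta_1^q)$ permuting solutions, and the identification of its fixed points with genuine neighbours are all correct. But there is a genuine gap, and it is the central one: you never exhibit, nor prove the existence of, a configuration $(\alpha_i,a_i)$ for which your Chebotarev set $C$ is nonempty. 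Nothing in the argument rules out that for \emph{every} choice of $\alpha_i,a_i$ the Galois action on the six solutions is incompatible with the cyclic relabelling constraint, making $C=\emptyset$ and the density $0$; the sentence ``select $\alpha_i,a_i$ so that $|C|/|G|=\frac19$'' is not a step of a proof but a restatement of the theorem. Your closing paragraph concedes exactly this (the ``delicate'' choice of $\alpha_i,a_i$, plus the non-degeneracy checks), and those two ``obstacles'' are essentially the entire content of the paper's proof.

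For comparison, the paper resolves this by explicit construction. It takes $F=\mathbb{Q}(\zeta,\sqrt[3]{2},\sqrt[3]{3})$, Galois of degree $18$ over $\mathbb{Q}$, the automorphism $\sigma$ fixing $\zeta$ with $\theta_1\mapsto\zeta\theta_1$, $\theta_2\mapsto\zeta^2\theta_2$, and $E$ the fixed field of $\langle\sigma\rangle$, so that $[F:E]=3$; it then writes down four vertices $A$ and six vertices $B$ in $F/E\times E$ (three $B$-vertices built from $\zeta^k\theta_1^2$, three from the roots of the auxiliary cubic $g(x)=x^3+21x^2+3x+7$, which lie in $E$ because $\sigma$ fixes $6^{1/3}=\theta_1\theta_2$), and verifies all the norm equations $N_{F/E}(\alpha+\beta)=uv$ by direct computation in Lemma~\ref{lem:K46}; the configuration was found by computer search in $\mathcal{P}(7,4)$. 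Only after this does Chebotarev enter, applied to $(x^3-2)(x^3-3)$ and the conjugacy class $C_\sigma=\{\sigma,\sigma^2\}$, giving density $\frac{2}{18}=\frac19$; for such $p$ a place $\psi$ of $F$ maps $E$ into $\mathbb{F}_p$, keeps $x^3-2$ irreducible, and commutes with the norm, so $\psi(A),\psi(B)$ form the desired $K_{4,6}$. Note also that your heuristic $\frac19=\frac13\cdot\frac13$ via a prime inert in a non-Galois cubic does not match this structure: in the paper the degree-$3$ extension $F/E$ sits over a field $E$ that is Galois of degree $6$ over $\mathbb{Q}$, and the density arises as $\frac16\cdot\frac23$ (split completely in $E$, then inert in $F/E$). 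A smaller point: your opening reduction ``translating the whole graph by a fixed vertex'' is unjustified, since $N(\alpha+\beta)$ is not translation-invariant and $\mathcal{P}(q,4)$ has no evident translation automorphisms; this is harmless, though, as you may simply restrict attention to configurations containing $(0,1)$ rather than claim a genuine loss of generality.
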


It would be very interesting to show that $\mathcal{P}(q, t)$ contains a copy of $K_{t, (t-1)!}$ for any $t \geq 4$ and infinitely many $q$. I strongly believe that this general claim should have a beautiful simple proof; but so far I have not found any.

As an amend to this, I show the following.
\begin{theorem}
\label{thm:sec}
For any $t \geq 4$ and any $m \geq 1$, there exists an infinite sequence of primes of positive density such that for any prime $p$ in this sequence, $\mathcal{P}(p, t)$ contains a copy of $K_{t-1, m}$. 
\end{theorem}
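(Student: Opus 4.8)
The plan is to reformulate the existence of a $K_{t-1,m}$ as the existence of $t-1$ vertices possessing at least $m$ common neighbours, and then to produce such a configuration by a direct counting argument. This in fact yields the stronger statement that $\mathcal{P}(q,t)$ contains $K_{t-1,m}$ for \emph{every} sufficiently large prime power $q$, which immediately implies Theorem~\ref{thm:sec}. First I would fix any $t-1$ distinct elements $\alpha_1,\dots,\alpha_{t-1}\in\mathbb{F}_{q^{t-1}}$ and analyse the common neighbours of the prospective left vertices $(\alpha_i,a_i)$. A vertex $(\beta,b)$ is a common neighbour precisely when $N(\alpha_i+\beta)=a_ib$ for all $i$; eliminating $b$ by dividing through the $i=1$ equation and using that the norm is multiplicative and nonvanishing on $\mathbb{F}_{q^{t-1}}^{*}$, this is equivalent to
\begin{equation*}
N\!\left(\frac{\alpha_i+\beta}{\alpha_1+\beta}\right)=\frac{a_i}{a_1}\qquad(i=2,\dots,t-1),
\end{equation*}
together with $b=N(\alpha_1+\beta)/a_1$.

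The common neighbours therefore correspond exactly to the fibres of the map
\begin{equation*}
\Phi\colon\ \mathbb{F}_{q^{t-1}}\setminus\{\pm\alpha_1,\dots,\pm\alpha_{t-1}\}\ \longrightarrow\ (\mathbb{F}_q^{*})^{t-2},\qquad \beta\ \longmapsto\ \left(N\!\left(\tfrac{\alpha_2+\beta}{\alpha_1+\beta}\right),\dots,N\!\left(\tfrac{\alpha_{t-1}+\beta}{\alpha_1+\beta}\right)\right).
\end{equation*}
The key step is a pigeonhole count. The domain of $\Phi$ has at least $q^{t-1}-2(t-1)$ elements while its codomain has only $(q-1)^{t-2}$, so some fibre $\Phi^{-1}(c_2,\dots,c_{t-1})$ has size at least $(q^{t-1}-2(t-1))/(q-1)^{t-2}$, a quantity of order $q$ which exceeds $m$ once $q$ is large in terms of $t$ and $m$. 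Choosing $m$ distinct elements $\beta_1,\dots,\beta_m$ from such a fibre and setting $a_1=1$, $a_i=c_i$ for $i\ge 2$, and $b_j=N(\alpha_1+\beta_j)$, one gets $t-1$ left vertices and $m$ right vertices with $N(\alpha_i+\beta_j)=a_ib_j$ for all $i,j$, that is, a copy of $K_{t-1,m}$.

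The remaining verifications are routine: the vertices are pairwise distinct because the $\alpha_i$ are distinct, the $\beta_j$ are distinct, and we have removed from the domain the coincidences $\beta=\pm\alpha_i$ (at most $2(t-1)$ points), which also guarantees $\alpha_i+\beta_j\ne 0$ so that all the relevant norms are nonzero. Thus there is essentially no real obstacle, and the argument does not need the restriction to primes nor to a sparse density; one could replace the conclusion of Theorem~\ref{thm:sec} by ``for all sufficiently large $q$''.

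If, on the other hand, one insists on an \emph{explicit} algebraic family of such copies, in the spirit of Theorem~\ref{thm:main}, I would instead build the configuration over a number field $L$ of degree $t-1$ whose Galois closure admits a $(t-1)$-cycle as Frobenius: one solves the multiplicative rank-one relations $N_{L/\mathbb{Q}}(\alpha_i+\beta_j)=a_ib_j$ (with $\alpha_i,\beta_j\in\mathcal{O}_L$ and $a_i,b_j\in\mathbb{Z}$) once and for all over $\mathbb{Q}$, and then reduces modulo primes $p$ that are inert in $L$, so that $\mathcal{O}_L/p\cong\mathbb{F}_{p^{t-1}}$ and $N_{L/\mathbb{Q}}$ reduces to the field norm of $\mathcal{P}(p,t)$. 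Chebotarev's density theorem then supplies a positive density of such $p$, and for all but finitely many of them the reduced $\alpha_i,\beta_j,a_i,b_j$ remain admissible. In this variant the genuine difficulty migrates entirely into the algebra, namely exhibiting an explicit rank-one solution of the system over $\mathbb{Q}$ with the $\alpha_i$ and $\beta_j$ in sufficiently general position; this is precisely why the counting proof above is the more economical route, and I expect the only subtlety there to be the bookkeeping ensuring distinctness and nonvanishing, rather than any substantive obstruction.
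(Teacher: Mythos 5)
Your fibre-counting proof is correct, and it establishes the literal statement with room to spare: after deleting $\beta=\pm\alpha_i$ the map $\Phi$ does land in $(\mathbb{F}_q^*)^{t-2}$, pigeonhole gives a fibre of size at least $(q^{t-1}-2(t-1))/(q-1)^{t-2}\geq q-1$ once $q^{t-2}\geq 2(t-1)$, and any $m$ points of that fibre yield a $K_{t-1,m}$ with $a_1=1$, $a_i=c_i$, $b_j=N(\alpha_1+\beta_j)$; your bookkeeping on distinctness and nonvanishing is also fine. But this is a genuinely different route from the paper's, and it is one the paper deliberately passes over: immediately after stating the theorem the author remarks that the existence of arbitrarily large $K_{t-1,m}$ in $\mathcal{P}(q,t)$ ``follows directly from the K{\H o}v{\'a}ri--S\'os--Tur\'an theorem'', and that the point of Theorem~\ref{thm:sec} is to provide an \emph{explicit} construction. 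Your count is essentially that counting argument specialized to the norm graph; it buys elementarity and a stronger conclusion (every sufficiently large prime power, hence density $1$), but it produces no explicit copy. The paper instead carries out what you only sketch in your final paragraph: with $\theta_i$ the roots of $x^m-2$, it sets $f_i(x)-r=x^{t-1}-x+\theta_i-r$, uses Hilbert irreducibility (Theorem~\ref{thm:hilbert1}) to choose $r$ so that the Galois group is $S_{t-1}$, fixes a $(t-1)$-cycle $\sigma$, and applies Chebotar\"ev (Theorem~\ref{thm:chebotarev}) to get a positive density of primes with Frobenius $\sigma$; for such $p$ each $\psi(f_i(x)-r)$ is irreducible over $\mathbb{F}_p$ (Lemma~\ref{lem:irreduc}) with root $\alpha_i$ generating $\mathbb{F}_{p^{t-1}}$, and the explicit copy is $A=\{(\zeta^k,1):1\leq k\leq t-2\}\cup\{(0,1)\}$, $B=\{(-\alpha_i,\psi(\theta_i-r)):1\leq i\leq m\}$. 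The ``genuine difficulty'' you defer --- exhibiting an explicit rank-one solution of the norm system --- is exactly what this choice of polynomials dissolves: since $x^{t-1}-x$ vanishes at $0$ and at every $(t-2)$-nd root of unity, the norm $N(\zeta^k-\alpha_i)=\psi(f_i(x)-r)(\zeta^k)$ collapses to the single value $\psi(\theta_i-r)$ for every left vertex, so all left weights can be taken equal to $1$. In short, your first proof is a valid, more elementary proof of the statement as written, but it delivers existence rather than the explicit subgraphs that are the paper's actual goal, and your second sketch is the paper's strategy stopped exactly where the paper's real work begins.
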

Of course it follows directly from the  K{\H o}v{\'a}ri-S\'os-Tur\'an theorem that $\mathcal{P}(q, t)$ contains arbitrarily large $K_{t-1, m}$. The point of Theorem \ref{thm:sec} is that it provides an explicit construction of such subgraphs.

The proof of Theorem~\ref{thm:main} is of the following nature. A relatively well-behaved copy of $K_{4, 6}$ is found in $\mathcal{P}(7, 4)$ using a computer search. Then using the approach of \cite{VuWood} and \cite{Grosu14}, this copy is lifted to infinitely many primes $p$. However, the techniques in \cite{Grosu14} only apply if the initial copy is in a finite field of very large characteristic. It is rather remarkable that the same methods can be used if the starting prime is small (in our case, $7$). Furthermore, an additional theoretical complication is introduced by the use of the norm; this requires a more delicate handling of the Galois groups involved.

\section{\normalsize{A Galois extension of $\mathbb{Q}$}}

The plan to prove Theorem~\ref{thm:main} is the following. We shall first define two finite extensions $E /\mathbb{Q}, F / E$ such that $[F : E] = 3$. Then in $F/E \times E$ we will construct a copy of $K_{4, 6}$, with the norm condition fulfilled by $N_{F/E}(x)$. We will then show that $E$ maps in a suitable sense to $\mathbb{F}_p$, for infinitely many $p$, while $F$ maps to a degree $3$ extension of $\mathbb{F}_p$. As the norm essentially stays the same, this will give a copy of $K_{4, 6}$ in $\mathcal{P}(p, 4)$.

Let $\zeta$ be a third root of unity. Let $\theta_1$ be the root $\sqrt[3]{2}$ of the polynomial $x^3 - 2$ and $\theta_2$ the root $\sqrt[3]{3}$ of the polynomial $x^3 - 3$. Define $F := \mathbb{Q}(\zeta, \theta_2, \theta_1)$.

\begin{lemma}
The field $F$ is a Galois extension of $\mathbb{Q}$ of degree $[F : \mathbb{Q}] = 18$. A basis of $F/\mathbb{Q}$ is given by all elements of the form $\zeta^i\theta_1^j\theta_2^k$, with $0 \leq i \leq 1$ and $0 \leq j, k \leq 2$.
\end{lemma}
\begin{proof}
The roots of the polynomial $x^3 - 2$ are $\zeta^i\theta_1$ with $0 \leq i \leq 2$. Similarly, the roots of the polynomial $x^3 - 3$ are $\zeta^i\theta_2, 0 \leq i \leq 2$. Hence $F$ is the splitting field of $(x^3-2)(x^3 - 3)$, in particular a Galois extension of $\mathbb{Q}$.

Clearly $[\mathbb{Q}(\zeta) : \mathbb{Q}] = 2$ with basis $\{1, \zeta\}$, and $[F : \mathbb{Q}(\zeta)] \leq 9$. We show that the elements of $A := \{\theta_1^j\theta_2^k : 0 \leq j, k \leq 2\}$ are pairwise linearly independent over $\mathbb{Q}(\zeta)$. Indeed, if
\begin{equation*}
\alpha \theta_1^{j_1}\theta_2^{k_1} + \beta\theta_1^{j_2}\theta_2^{k_2} = 0, \quad \alpha, \beta \in \mathbb{Q}(\zeta)^*, (j_1, k_1) \neq (j_2, k_2),
\end{equation*}
then $ \theta_1^{j_1 - j_2}\theta_2^{k_1 - k_2} \in \mathbb{Q}(\zeta)$. Then $\mathbb{Q}(\zeta)$ contains a cube root of $2$, $3$, $6$ or $12$, and hence has degree at least $3$ over $\mathbb{Q}$, a contradiction.

Then by Theorem 1.3, \cite{carr09}, $A$ is linearly independent over $\mathbb{Q}(\zeta)$. Hence $F/\mathbb{Q}(\zeta)$ is an extension of degree $9$ and has $A$ as a basis. Then the elements $\zeta^i\theta_1^j\theta_2^k$ also form a basis of $F$ over $\mathbb{Q}$. 
\end{proof}

Let $S := \{\zeta^i\theta_1^j\theta_2^k :0 \leq i \leq 1, 0 \leq j, k \leq 2\}$ be the basis of $F /\mathbb{Q}$.

Let $G$ be the Galois group of $F/\mathbb{Q}$. Any element $\tau \in G$ is described by how it acts on $\zeta, \theta_1$ and $\theta_2$. Consider the maps
\begin{equation*}
\tau_1 : \zeta \rightarrow \zeta^2, \quad \tau_2 : \theta_1 \rightarrow \zeta\theta_1, \quad \tau_3 : \theta_2 \rightarrow \zeta\theta_2,
\end{equation*}
which act on one element of $\{\zeta, \theta_1, \theta_2\}$ and fix the remaining two. This defines $\tau_i$ at every element of $S$, and by linearity we can extend each $\tau_i$ to the whole of $F$. Then $\tau_i$ is multiplicative on $S$, in particular $\tau_i \in G$. It follows that $\tau_1, \tau_2$ and $\tau_3$ generate $G$ (as they generate $18$ distinct elements). But the subgroup $<\tau_2, \tau_3>$ is isomorphic to $\mathbb{Z}_3 \times \mathbb{Z}_3$, a direct product of two cyclic groups, and $\tau_1$ acts as conjugation. Thus $G$ is isomorphic to the semi-direct product $(\mathbb{Z}_3 \times \mathbb{Z}_3) \rtimes_\phi \mathbb{Z}_2$, with the map $\phi : \mathbb{Z}_3 \times \mathbb{Z}_3 \rightarrow \mathbb{Z}_3 \times \mathbb{Z}_3$ sending an element to its inverse.

Let $\sigma \in G$ be the Galois automorphism sending $\zeta \rightarrow \zeta$, $\theta_1 \rightarrow \zeta\theta_1$ and $\theta_2 \rightarrow \zeta^2\theta_2$. Then $\sigma$ corresponds to the element $(1, 2) \rtimes_\phi 0$, and so has conjugacy class $C_\sigma = \{\sigma, \sigma^2\}$.

Note that $\{1, \sigma, \sigma^2\}$ is a subgroup of $G$. Let $E \subset F$ be the fixed field of this subgroup. Then $\mathbb{Q}(\zeta) \subset E$. However, as $\sigma$ permutes the roots of $x^3-2$ cyclically, this polynomial is still irreducible over $E$. Then $F = E(\theta_1)$. The fields $E$ and $F$ are our desired extensions.

\section{\normalsize{The construction of $K_{4, 6}$}}

We are now ready to describe the construction of $K_{4, 6}$ in $F/E \times E$. We shall use the basis $1, \theta_1, \theta_1^2$ of $F/E$.

Consider the polynomial $g(x) := x^3 + 21x^2 + 3x+ 7$. It has the roots
\begin{equation*}
-4\cdot 6^{1/3} - 2\cdot6^{2/3} - 7, \quad -4\cdot 6^{1/3} \zeta - 2\cdot 6^{2/3} \bar{\zeta}- 7, \quad -4\cdot 6^{1/3} \bar{\zeta} - 2\cdot 6^{2/3} \zeta - 7,
\end{equation*}
which all lie in $F$. The most important thing for us is that $\sigma(6^{1/3}) = \sigma(\theta_1 \theta_2) = \zeta^3\theta_1 \theta_2 = 6^{1/3}$, and hence $\sigma$ fixes all the roots of $g$. Then all the roots of $g$ belong to $E$.

Now define $A := \{(0, 3), (1, 4), (2, 5), (\theta_1 + 1, 6)\}$. Further define 
\begin{equation*}
\begin{split}
B := &\left\{(\zeta^k \theta_1^2 - 1, 1) : 0 \leq k \leq 2\right\} \\
	&\quad \bigcup \left\{\left(-\frac{1-\eta}{4}\theta_1^2 - \frac{1+\eta}{2}\theta_1 - 1, \frac{1+3\eta^2}{4}\right) : \eta \textrm{ a root of $g$}\right\}.
\end{split}
\end{equation*}
Any element of $A \cup B$ is of the form $(\alpha, a)$ with $\alpha \in F$ and $a \in E$. The existence of a $K_{4, 6}$ is shown by the following.
\begin{lemma}
\label{lem:K46}
For any $(\alpha, u) \in A$ and any $(\beta, v) \in B$ we have $N_{F/E}(\alpha+\beta) = uv$.
\end{lemma}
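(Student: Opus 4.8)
The plan is to reduce everything to the explicit formula for the norm of a cubic extension and then verify the $4 \times 6 = 24$ required equalities directly, exploiting the structure of $A$ and $B$ to collapse them into a handful of cases. First I would record the norm formula for $F/E$. Since $\mathbb{Q}(\zeta) \subset E$, the element $\zeta$ already lies in $E$, so the three roots $\theta_1, \zeta\theta_1, \zeta^2\theta_1$ of $x^3 - 2$ all lie in $F$; as $x^3 - 2$ is irreducible over $E$, the extension $F/E$ is cyclic of degree $3$ with the $E$-embeddings given by $\theta_1 \mapsto \zeta^i\theta_1$. Hence for any $w = c_0 + c_1\theta_1 + c_2\theta_1^2$ with $c_0, c_1, c_2 \in E$,
\begin{equation*}
N_{F/E}(w) = \prod_{i=0}^{2}\left(c_0 + c_1\zeta^i\theta_1 + c_2\zeta^{2i}\theta_1^2\right) = c_0^3 + 2c_1^3 + 4c_2^3 - 6c_0c_1c_2,
\end{equation*}
which one checks by expanding the product (or as the determinant of multiplication by $w$). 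The preliminary point is that for every $(\alpha, u) \in A$ and $(\beta, v) \in B$ the sum $\alpha + \beta$ can indeed be written in the basis $1, \theta_1, \theta_1^2$ with coefficients in $E$: this holds because $\zeta \in E$ and, as shown in the previous section, every root $\eta$ of $g$ lies in $E$.

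Next I would split the verification according to the two types of elements making up $B$. For the three elements $(\zeta^k\theta_1^2 - 1, 1)$, the coefficient of $\theta_1^2$ in $\alpha + \beta$ is $\zeta^k$, and since $\zeta^{3k} = 1$ its cube contributes the constant $4$ to the norm independently of $k$; moreover the mixed term $-6c_0c_1c_2$ always vanishes (the $\theta_1$-coefficient is $0$ when $\alpha \in \{0,1,2\}$, and the constant coefficient is $0$ when $\alpha = \theta_1 + 1$). A one-line computation then gives $N_{F/E}(\alpha+\beta) = (u_0 - 1)^3 + 4$ for the three rational entries $\alpha = u_0$, which lies in $\{3,4,5\}$ and hence equals the corresponding $u = u_0 + 3$, while $N_{F/E}(\alpha+\beta) = 6$ for $\alpha = \theta_1 + 1$; in all six cases this matches $uv$.

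The substantive case is that of the three elements of $B$ built from the roots $\eta$ of $g$, where the answer genuinely depends on $\eta$ and must be simplified using $g(\eta) = 0$, i.e. $\eta^3 = -21\eta^2 - 3\eta - 7$. Writing $\alpha + \beta$ in the basis and applying the norm formula produces a cubic polynomial in $\eta$; reducing it modulo $g$ should leave exactly $uv = u\cdot\frac{1+3\eta^2}{4}$. For the rational entries $\alpha = u_0$ (with $u = u_0 + 3$) I expect the discrepancy to collapse to the identity $(u_0 - 1)^3 = u_0 - 1$, which holds precisely because $u_0 - 1 \in \{-1, 0, 1\}$; for $\alpha = \theta_1 + 1$ the norm reduces to $\frac{3}{16}(1-\eta)^3$, and the relation $g(\eta) = 0$ turns $(1-\eta)^3$ into $8 + 24\eta^2$, which matches $6\cdot\frac{1+3\eta^2}{4}$.

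The one genuine conceptual point — as opposed to bookkeeping — is ensuring that the quantity being computed really is the relative norm $N_{F/E}$ and not some other norm: this rests on $\zeta \in E$ (so that the conjugates $\zeta^i\theta_1$ do not leave $F$) and on the irreducibility of $x^3 - 2$ over $E$, both of which are available from the previous section. Given that, the only mild subtlety is that each $\eta$-dependent identity must hold for all three roots of $g$ simultaneously; this is automatic because I verify each as a polynomial congruence modulo $g$, so that the difference $N_{F/E}(\alpha+\beta) - uv$ lies in the ideal $(g)$ and therefore vanishes at every root. Everything else is a finite, routine calculation.
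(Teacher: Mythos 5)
Your proposal is correct and takes essentially the same approach as the paper: both reduce to the explicit cubic norm formula $c_0^3 + 2c_1^3 + 4c_2^3 - 6c_0c_1c_2$ (you derive it as a product of conjugates using $\zeta \in E$, the paper as the determinant of multiplication by $w$) and then verify the required equalities case by case, using $g(\eta) = 0$ to reduce $(1\pm\eta)^3$ for the $\eta$-dependent elements of $B$. The only differences are organizational — the paper indexes the four norm equations by the elements of $A$ and checks them against both types of $B$-elements, while you split by the type of $B$-element and compress the three rational cases into the single identity $(u_0-1)^3 = u_0 - 1$ — but the substance of the computation is identical.
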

\begin{proof}
Let $ x := a\theta_1^2 + b\theta_1 + c, a, b, c \in E$, be any element of $F$. The norm $N_{F/E}(x)$ is defined as the determinant of the linear map $y \rightarrow xy$. For the basis $1, \theta_1, \theta_1^2$ we get
\begin{equation*}
N_{F/E}(x) = \det\left(
\begin{array}{ccc}
c & 2a & 2b\\
b & c & 2a\\
a & b & c\\
\end{array}
\right) = c^3+2b^3+4a^3- 6abc.
\end{equation*}
By our choice of $B$, $\beta$ is of the form $a\theta_1^2 + b\theta_1 - 1$. Thus we need to check that the following holds:
\begin{align*}
N_{F/E}(a\theta_1^2 + b\theta_1 - 1) &= 4a^3 + 2b^3 + 6ab - 1 &= 3v,\\
N_{F/E}(a\theta_1^2 + b\theta_1) &= 4a^3 + 2b^3 &= 4v,\\
N_{F/E}(a\theta_1^2 + b\theta_1 + 1) &= 4a^3 + 2b^3 - 6ab + 1 &= 5v,\\
N_{F/E}(a\theta_1^2 + (b+1)\theta_1) &= 4a^3 + 2b^3 + 6b^2 + 6b + 2 &= 6v,\\
\end{align*}
for any $(\beta, v) \in B$. This readily follows if $(\beta, v)$ is of the form $(\zeta^k\theta_1^2-1, 1)$.

So assume $(\beta, v) = \left(-\frac{1-\eta}{4}\theta_1^2 - \frac{1+\eta}{2}\theta_1 - 1, \frac{1+3\eta^2}{4}\right)$ for some root $\eta$ of $g$.

As $\eta^3 = -21\eta^2-3\eta-7$, we have $(1-\eta)^3 = 8+24\eta^2$. Also $(1+\eta)^3 = -6-18\eta^2$. Consequently
\begin{equation*}
4\left(-\frac{1-\eta}{4}\right)^3 + 2\left(- \frac{1+\eta}{2}\right)^3 = -\frac{1}{16}(8+24\eta^2) + \frac{1}{4}(6 + 18\eta^2) = 1+3\eta^2 = 4v.
\end{equation*}
Furthermore,
\begin{equation*}
1 - 6\left(-\frac{1-\eta}{4}\right)\left(- \frac{1+\eta}{2}\right) = 1 - \frac{3}{4}(1 - \eta^2) = v.
\end{equation*}
These two facts together verify the first three norm equations. For the final one, note that
\begin{equation*}
3\left(- \frac{1+\eta}{2}\right)^2 + 3\left(- \frac{1+\eta}{2}\right)+1 = \frac{3+3\eta^2 +6\eta}{4} - \frac{3+3\eta}{2} + 1 = \frac{1 + 3\eta^2}{4} = v.
\end{equation*}
This implies that the last norm equation holds as well, finishing the proof.
\end{proof}
For the proof of Theorem~\ref{thm:main} we need one further ingredient.
\begin{theorem}[Chebotar\"ev's density theorem, \cite{Stevenhagen96}]
\label{thm:chebotarev}
Let $f$ be a polynomial with integer coefficients and with leading coefficient $1$. Assume that the discriminant $\Delta(f)$ of f does not vanish. Let $C$ be a conjugacy class of the Galois group $G$ of f. Then the set of primes $p$ not dividing $\Delta(f)$ for which the Frobenius substitution $\sigma_p$ belongs to $C$ has a density, and this density equals $\frac{|C|}{|G|}$.
\end{theorem}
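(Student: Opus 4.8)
The plan is to prove the theorem by the standard analytic route built on Artin $L$-functions, which realizes the Frobenius distribution as the ``spectral'' content of the Dedekind zeta function. First I would set up the field-theoretic dictionary: let $K$ be the splitting field of $f$ over $\mathbb{Q}$, so that $G = \gal(K/\mathbb{Q})$ is the Galois group of $f$, and observe that the hypothesis $\Delta(f) \neq 0$ makes $f$ separable, whence every prime $p \nmid \Delta(f)$ is unramified in $K$ (the finitely many discarded primes do not affect any density). For such $p$ and any prime $\mathfrak{P}$ of $K$ above $p$, the Frobenius substitution is the unique element of the decomposition group inducing $x \mapsto x^p$ on the residue field; as $\mathfrak{P}$ ranges over the primes above $p$ these elements sweep out the conjugacy class $\sigma_p$, so the quantity to estimate is $\sum_{p : \sigma_p = C} p^{-s}$ as $s \to 1^+$.

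The combinatorial heart is character orthogonality. For each irreducible character $\chi$ of $G$, with associated representation $\rho_\chi$, introduce the Artin $L$-function $L(s,\rho_\chi) = \prod_p \det(1 - \rho_\chi(\sigma_p)p^{-s})^{-1}$ (correcting the finitely many ramified Euler factors). Taking logarithms and isolating the $k=1$ terms gives $\sum_p \chi(\sigma_p)\, p^{-s} = \log L(s,\rho_\chi) + O(1)$ as $s \to 1^+$, since the prime-power terms with $k \geq 2$ converge absolutely near $s=1$. By the second orthogonality relation the indicator of the class $C$ equals $\frac{|C|}{|G|}\sum_\chi \overline{\chi(C)}\,\chi$, so summing over primes yields
\[
\sum_{p : \sigma_p = C} p^{-s} = \frac{|C|}{|G|}\sum_\chi \overline{\chi(C)}\,\bigl(\log L(s,\rho_\chi) + O(1)\bigr).
\]
The trivial character contributes $L(s,\mathbf{1}) = \zeta(s)$, whose simple pole gives $\log L(s,\mathbf{1}) = \log\frac{1}{s-1} + O(1)$; if every nontrivial term stays bounded, only this survives and we obtain $\sum_{p : \sigma_p = C} p^{-s} \sim \frac{|C|}{|G|}\log\frac{1}{s-1}$, which is exactly the assertion that the Dirichlet density equals $|C|/|G|$.

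The main obstacle, and the only genuinely deep ingredient, is to show that $\log L(s,\rho_\chi)$ stays bounded as $s \to 1^+$ for every nontrivial irreducible $\chi$, i.e. that $L(s,\rho_\chi)$ is holomorphic and non-vanishing at $s=1$. For one-dimensional $\chi$ this is class field theory: these are Hecke $L$-functions of the abelianized extension, and their non-vanishing at $s=1$ is the higher analogue of Dirichlet's theorem on primes in arithmetic progressions (for $K=\mathbb{Q}(\zeta_n)$ it recovers Dirichlet's result exactly, the logically primitive case). For higher-dimensional $\chi$ I would invoke Brauer's induction theorem to write $\chi = \sum_i n_i\,\mathrm{Ind}_{H_i}^G \psi_i$ with each $\psi_i$ one-dimensional and $n_i \in \mathbb{Z}$; by the inductivity relation $L(s,\mathrm{Ind}_{H}^G\psi) = L_{K^{H}}(s,\psi)$ this factors $L(s,\rho_\chi)$ into integer powers of Hecke $L$-functions over the intermediate fields $K^{H_i}$, furnishing meromorphic continuation to a neighborhood of $s=1$. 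Holomorphy there follows because a pole can arise only from a trivial $\psi_i$, and the total multiplicity of such contributions equals the multiplicity of the trivial representation in $\rho_\chi$, namely zero; non-vanishing at $s=1$ then follows from the non-vanishing of each Hecke factor on the line $\Re(s)=1$. An alternative, more elementary route is Chebotar\"ev's original field-crossing argument, which reduces the general case to the abelian (cyclotomic) case by passing to the fixed field $K^{\langle\sigma\rangle}$ of a cyclic subgroup and counting degree-one primes there; this avoids Brauer induction at the cost of delicate bookkeeping relating the primes of the intermediate field to those of $\mathbb{Q}$.

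Finally, the theorem asserts a genuine density. The $L$-function computation yields the Dirichlet density directly; to upgrade to natural density one applies a Tauberian theorem, using the meromorphic continuation together with the absence of zeros of the relevant $L$-functions on the line $\Re(s)=1$, or an effective zero-free region. I would close by remarking that the applications in this note require only the existence of a positive-density (indeed infinite) set of primes in the class $C$, so even the weaker Dirichlet-density conclusion already suffices; the analytic facts above are then packaged into the stated density through the displayed identity.
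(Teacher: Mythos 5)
This statement is quoted in the paper as a black box: the theorem is imported from \cite{Stevenhagen96} (with \cite{MilneAnt} indicated for a standard treatment), and no proof of it appears in the paper itself. So there is no in-paper argument to compare against; what can be assessed is whether your sketch is a sound outline of a known proof, and it is. You follow the classical analytic route: reduce to estimating $\sum_{p:\sigma_p=C}p^{-s}$ as $s\to 1^+$, expand the indicator of $C$ by column orthogonality of characters, and convert each character sum into $\log L(s,\rho_\chi)+O(1)$. The one genuinely delicate point --- holomorphy and non-vanishing of $L(s,\rho_\chi)$ at $s=1$ for nontrivial irreducible $\chi$ --- you handle correctly: after Brauer induction and the inductivity of Artin $L$-functions, the order of $L(s,\rho_\chi)$ at $s=1$ is $-\sum_{\psi_i \text{ trivial}} n_i$, and by Frobenius reciprocity this sum equals $\langle \chi, \mathbf{1}\rangle = 0$, while the nontrivial Hecke factors are holomorphic and non-vanishing there by Hecke's theorems. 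Your closing remarks are also apt on both counts: the formulation in \cite{Stevenhagen96} does assert a (natural) density, so the Tauberian upgrade from Dirichlet density, resting on non-vanishing of the relevant $L$-functions on $\Re(s)=1$, is the right thing to mention; and the applications in this note (Theorems \ref{thm:main} and \ref{thm:sec}) indeed only use the existence of infinitely many primes with $\sigma_p\in C$, for which Dirichlet density already suffices.

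Two small caveats, neither a gap in substance. First, your sketch leans on three deep cited inputs (Artin reciprocity identifying one-dimensional Artin $L$-functions with Hecke $L$-functions, non-vanishing of Hecke $L$-functions at $s=1$, and Brauer induction) without proof; for a theorem of this stature that is the appropriate level of detail, but it should be stated as such rather than as a self-contained proof. Second, the claim that $p\nmid\Delta(f)$ forces $p$ unramified in the splitting field deserves one line of justification ($f$ remains separable modulo $p$, so the reduction argument applies); as stated it is correct, and it matches the convention of \cite{Stevenhagen96}, where $\sigma_p$ is defined via places of the splitting field exactly as in Section 2 of this paper.
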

In what follows we shall use the terminology from \cite{Stevenhagen96}. In particular, if $K$ is a field of characteristic $0$ and $p$ a prime number, a \textit{place} of $K$ over $p$ is a map $\psi:K \rightarrow \overline{\mathbb{F}}_p \cup \{\infty\}$ with the following properties:
\begin{itemize}
\item[(i)] $\psi^{-1}{\overline{\mathbb{F}}_p}$ is a subring of $K$, and $\psi : \psi^{-1}{\overline{\mathbb{F}}_p} \rightarrow \overline{\mathbb{F}}_p$ is a ring homomorphism;
\item[(ii)] $\psi(x) = \infty$ if and only if $\psi(x^{-1}) = 0$, for any non-zero $x \in K$.
\end{itemize}
See \cite{MilneAnt}, Chapter $8$, for a standard treatment of Chebotar\"ev's theorem.
\begin{proof}[Proof of Theorem~\ref{thm:main}]
The polynomial $f(x) := (x^3-2)(x^3 - 3)$ has discriminant $\Delta(f) = 26244$ and splitting field $F$. Applying Theorem~\ref{thm:chebotarev} to the conjugacy class $C_\sigma$ of $\sigma$ in the Galois group $G$ of $f$ gives an infinite sequence $\mathcal{S}$ of primes of density $\frac{|C_\sigma|}{|G|} = \frac{1}{9}$.

Let $p$ be any prime in $\mathcal{S}$ not dividing the discriminant $\Delta(g) = -248832$ of $g$, and $\psi$ a place of $F$ over $p$. By assumption, $\sigma_p \in C_\sigma = \{\sigma, \sigma^2\}$. Thus the Frobenius automorphism of $\mathbb{F}_p$ fixes the elements of $\psi(E)$, and permutes cyclically the roots of $x^3 - 2$. 

Hence $\psi$ maps any root of $g$ to an element of $\mathbb{F}_p \cup \{\infty\}$. However, as $x^3g(\frac{1}{x})$ has constant term $1$, none of the roots of $g$ map to $\infty$. As $p \not| \Delta(g)$, all the roots of $g$ are distinct elements of $\mathbb{F}_p$.

For the same reason there is a cube root of unity present in $\mathbb{F}_p$.

As the roots of $x^3 - 2$ are permuted cyclically, $x^3 - 2$ stays irreducible over $\mathbb{F}_p$. Let us adjoin the root $\psi(\theta_1)$ of $x^3 - 2$ to form $\mathbb{F}_{p^3}$. Then the sets $\psi(A)$ and $\psi(B)$ are well-defined in $\mathbb{F}_{p^3} / \mathbb{F}_p$. Furthermore $\psi(N_{F/E}(x)) = N_{\mathbb{F}_{p^3} / \mathbb{F}_p}(\psi(x))$ holds for any $x = \alpha+\beta$, with $(\alpha, a) \in A, (\beta, b) \in B$. Then Lemma~\ref{lem:K46} shows that $\psi(A)$ and $\psi(B)$ form a $K_{4, 6}$ in $\mathcal{P}(p, 4)$.
\end{proof} 

\section{\normalsize The construction of $K_{t-1, m}$}

In this section we prove Theorem~\ref{thm:sec}. Let $t \geq 4$ and $m \geq 1$ be arbitrary. Let $\theta_1, \theta_2, \ldots, \theta_m$ be the roots of $x^m - 2$. This time there is no particular reason for choosing this polynomial; any $m$ conjugates will do. Let $L$ be the splitting field of $(x^m-2)(x^{t-2} - 1)$.

Define the polynomials $f_i(x) = x^{t-1} - x + \theta_i, 1 \leq i \leq m$. The derivative $f_1'(x) = (t-1)x^{t-2} - 1$ has $t-2$ distinct roots $x_1, \ldots, x_{t-2}$. Then
\begin{equation*}
f_1(x_i) = \frac{x_i}{t-1} - x_i + \theta_1 = -(1 - \frac{1}{t-1})x_i + \theta_1.
\end{equation*}
Hence $f_1$ takes distinct values at $x_i, 1 \leq i \leq t-2$. We now use the following theorem of Hilbert.
\begin{theorem}[\cite{serre92}, Theorem 4.4.5]
\label{thm:hilbert1}
Let $K$ be any field of characteristic $0$, and $g \in K[x]$ of degree $n$. Suppose $g'(x)$ has distinct zeroes, and $g$ takes distinct values at the zeroes of $g'$. Then $g(x) - T$ has Galois group $S_n$ over $K(T)$.
\end{theorem}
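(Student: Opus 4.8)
The plan is geometric: realise $g(x)-T$ as a degree-$n$ branched cover of the affine $T$-line and extract the Galois group from the ramification that the two hypotheses force. Write $M$ for the splitting field of $g(x)-T$ over $K(T)$ and $G := \mathrm{Gal}(M/K(T))$, viewed as a subgroup of $S_n$ through its action on the $n$ roots. Enlarging $K$ to its algebraic closure only shrinks $G$ to a subgroup (the geometric Galois group), so proving that this subgroup is $S_n$ proves the theorem; I shall therefore assume $K=\bar K$, in which case the zeroes of $g'$ and the critical values are $K$-rational. First I would record transitivity: $g(x)-T$ is monic and linear in $T$, hence a primitive, irreducible element of $K[x,T]$, so by Gauss's lemma it is irreducible as a polynomial in $x$ over $K(T)$, and $G$ is transitive.

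Next I would produce a transposition in $G$ from the ramification over a finite critical value. Fix a zero $x_0$ of $g'$ and set $c=g(x_0)$. Since $g'$ has simple zeroes, $g''(x_0)\neq 0$, so $x_0$ is a double — and not a higher — root of $g(x)-c$; since the critical values are pairwise distinct, no other zero of $g'$ maps to $c$, so every remaining root of $g(x)-c$ is simple. Thus $g(x)-c$ has root multiplicities $(2,1,\dots,1)$. We are in characteristic $0$, so the place $T=c$ is tamely ramified in $M$ and the inertia group of a place of $M$ above it is cyclic; a generator $\sigma$ acts on the $n$ roots with cycle lengths equal to the ramification indices of $(T-c)$ in $K(x)$, that is, to the root multiplicities of $g(x)-c$. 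Hence $\sigma$ has cycle type $(2,1,\dots,1)$: it is a transposition lying in $G$.

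It remains to upgrade \emph{transitive and containing a transposition} into \emph{all of} $S_n$, and this is the main obstacle, since a transitive group containing a transposition need not be symmetric (for example the dihedral group $D_4\le S_4$). I would close the gap with a classical theorem of Jordan: a \emph{primitive} permutation group containing a transposition is the full symmetric group. So it suffices to show $G$ is primitive, which by L\"uroth's theorem and the Galois correspondence is equivalent to $g$ being indecomposable, i.e.\ admitting no factorisation $g=g_2\circ g_1$ with $\deg g_1,\deg g_2\ge 2$. Such a decomposition, however, is incompatible with the fibre structure found above: choosing a zero $\beta$ of $g_2'$ (one exists, as $\deg g_2'\ge 1$), every point of $g_1^{-1}(\beta)$ is a critical point of $g$ with common value $g_2(\beta)$ and local ramification at least $2$, so the fibre of $g$ over the critical value $g_2(\beta)$ has either two distinct ramified points or a single point of ramification at least $4$ — either way contradicting that every critical fibre of $g$ has type $(2,1,\dots,1)$. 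Hence $g$ is indecomposable, $G$ is primitive, and Jordan's theorem gives $G=S_n$. I expect the only genuinely delicate points to be the dictionary relating the cycle type of a tame inertia generator to the ramification indices (routine in characteristic $0$) and this final indecomposability step, in which the distinct-critical-values hypothesis is exactly what is needed.
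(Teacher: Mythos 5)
The paper offers no proof of this statement at all --- it is imported verbatim from Serre \cite{serre92}, Theorem 4.4.5 --- so the relevant comparison is with the classical argument given there. Your proof is correct, but it takes a genuinely different route. The classical proof shares your first two steps (irreducibility of $g(x)-T$ via linearity in $T$, and the observation that each of the $n-1$ distinct finite critical values has fibre of type $(2,1,\dots,1)$, so that tame inertia above it is generated by a transposition), but then it uses \emph{all} the branch points at once: since $\mathbb{P}^1$ admits no nontrivial cover unramified everywhere, the geometric Galois group is generated by the inertia subgroups, hence is a transitive group generated by transpositions, which is $S_n$ by an elementary connectivity lemma. You instead extract a \emph{single} inertia transposition and compensate by proving primitivity, via the dictionary between blocks, intermediate fields of $\bar{K}(x)/\bar{K}(T)$, and decompositions $g = g_2 \circ g_1$, with the distinct-critical-values hypothesis ruling out any nontrivial decomposition, and you finish with Jordan's theorem that a primitive group containing a transposition is symmetric. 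This trades the global generation-by-inertia statement for Jordan's theorem plus the Ritt-type decomposition dictionary; both routes are sound, and yours has the mild virtue of needing only one branch point's local behaviour beyond the indecomposability count. One step you should make explicit: L\"uroth a priori produces an intermediate field $\bar{K}(y)$ with $x \mapsto y \mapsto T$ given by \emph{rational} functions, and the equivalence with a \emph{polynomial} decomposition $g = g_2 \circ g_1$ requires noting that the place at infinity is totally ramified in $\bar{K}(x)/\bar{K}(T)$ (because $g$ is a polynomial), so that after composing with suitable M\"obius transformations both maps may be taken to be polynomials; your fibre-counting contradiction then applies as written. With that remark, and the routine tame-inertia cycle-type dictionary you already flag, the argument is complete.
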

Therefore $f_1(x) - T$ is irreducible and with Galois group $S_{t-1}$ over $L(T)$. By Hilbert Irreducibility (\cite{serre92}, Theorem~3.4.1 and Proposition~3.3.5), we can find an integer $r$ such that $f_1(x) - r$ stays irreducible over $L$ and still has Galois group $S_{t-1}$.

Let $F$ be the splitting field of $f_1(x) - r$ over $L$. Let $\sigma$ be the Galois automorphism of $F$ corresponding to the element $(1 2 \ldots t-1)$ in the Galois group of $F/L$. Then $\sigma$ generates a subgroup $H = \{1, \sigma, \ldots, \sigma^{t-2}\}$. Let $E := F^H$ be the fixed field of this subgroup. 

\begin{lemma}
\label{lem:irreduc}
Let $\theta_i \in E$ be any conjugate of $\theta_1$. Then $\sigma$ permutes the roots of the polynomial $f_i(x) - r = x^{t-1} - x + \theta_i - r$ cyclically, for any $1 \leq i \leq m$.
\end{lemma}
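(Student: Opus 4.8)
The plan is to reduce the statement to a transitivity assertion and then to a triviality about cyclic group actions. Since $\theta_i\in L\subseteq E$ and $r\in\mathbb{Q}\subseteq E$, and $\sigma$ fixes $E$ pointwise, $\sigma$ fixes every coefficient of $f_i(x)-r$ and therefore maps its root set to itself. As $H=\langle\sigma\rangle=\gal(F/E)$ is cyclic of order $t-1$ and $f_i(x)-r$ has exactly $t-1$ roots, it is enough to show that $H$ acts \emph{transitively} on them: a cyclic group of order $t-1$ acting transitively on $t-1$ points acts regularly, so its generator $\sigma$ is automatically a single $(t-1)$-cycle. Transitivity of $H$ amounts to the two statements that the roots of $f_i(x)-r$ lie in $F$ and that $f_i(x)-r$ is irreducible over $E$; for $i=1$ both hold by the very definition of $\sigma$ as the cycle $(1\,2\,\cdots\,t-1)$, so the entire task is to transport them from $i=1$ to a general $i$.

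The transport should use that the $\theta_i$ are $\mathbb{Q}$-conjugate. I would pick $\gamma\in\gal(L/\mathbb{Q})$ with $\gamma(\theta_1)=\theta_i$ and lift it to the Galois closure $N$, which here is simply the splitting field over $L$ of the rational polynomial $\prod_{i=1}^m\bigl(f_i(x)-r\bigr)=(x^{t-1}-x-r)^m-2(-1)^m$. Because $\gamma$ fixes $1,-1,-r$ and sends $\theta_1$ to $\theta_i$, it carries $f_1(x)-r$ to $f_i(x)-r$ and the splitting field $F$ of the former to the splitting field $F_i$ of the latter; in particular $f_i(x)-r$ is again irreducible over $L$ with group $S_{t-1}$, and $\gamma\sigma\gamma^{-1}$ is visibly a $(t-1)$-cycle on the roots of $f_i(x)-r$. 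Organising this cleanly, one realises $\gal(N/L)$ as a subgroup of $\prod_{i=1}^m S_{t-1}$ that fixes each $\theta_i$ and hence preserves the block of roots of each factor, each of the $m$ projections being onto.

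The hard part — and surely the ``more delicate handling of the Galois groups'' flagged in the introduction — is that $\gamma$ fixes only $L$, not $E$, so its lift conjugates $H$ to a \emph{different} cyclic subgroup and sends $F$ to $F_i$, which need not equal $F$. Thus it is $\gamma\sigma\gamma^{-1}$, and not $\sigma$ itself, that one sees cycling the roots of $f_i(x)-r$, and a priori those roots need not even lie in $F$. What is really needed is a \emph{single} automorphism restricting to $\sigma$ on $F$ and acting as a $(t-1)$-cycle on every block at once; inside $\gal(N/L)\le\prod_i S_{t-1}$ the right object is the diagonal element that acts as $(1\,2\,\cdots\,t-1)$ on each block. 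This element restricts to $\sigma$ on $F$, hence fixes $E$, and is cyclic on each $f_i(x)-r$ by construction, and it is its conjugacy class in $\gal(N/\mathbb{Q})$ — rather than that of $\sigma$ alone — that should be substituted for $C_\sigma$ when Chebotar\"ev's theorem is invoked. The one thing that must be secured is that such a diagonal $(t-1)$-cycle genuinely lies in $\gal(N/L)$, i.e.\ that the image of $\gal(N/L)$ in $\prod_i S_{t-1}$ is large enough (the full product, or at least its diagonal copy of $S_{t-1}$); I would arrange this by choosing $r$ within a suitable Hilbert subset, after which the cyclic action on every $f_i(x)-r$, and then the reduction modulo $p$ exactly as in the proof of Theorem~\ref{thm:main}, follow immediately.
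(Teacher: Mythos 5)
Your proposal diverges from the paper in a striking way: the paper's own proof is precisely the naive transport argument that you describe and then reject. It takes a $\mathbb{Q}$-automorphism of $L$ sending $\theta_1$ to $\theta_i$ and asserts, without justification, that it extends ``to an automorphism $\tau$ of $F$ mapping $E$ to itself''; granted this, it argues that if $\sigma$ did not act cyclically on the roots of $f_i(x)-r$, the $\sigma$-orbits would produce a nontrivial factorization of $f_i(x)-r$ over $E$, whose image under $\tau^{-1}$ would contradict the irreducibility of $f_1(x)-r$ over $E$. Your objection hits exactly the unjustified step: any extension of $\gamma$ maps $F$ onto the splitting field $F_i$ of $f_i(x)-r$ over $L$, and the paper's choice of $r$ --- Hilbert irreducibility applied to $f_1(x)-T$ alone --- gives no control over $F_i$. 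In general $F_i\neq F$, the roots of $f_i(x)-r$ need not lie in $F$ at all, and then the lemma as stated fails. (Moreover, even granting $\tau(F)=F$, one must still adjust $\tau$ by an element of $\gal(F/L)$ to achieve $\tau(E)=E$, which uses that conjugation by $\tau$ preserves cycle type in $S_{t-1}$ --- itself delicate when $t-1=6$.) So what you flag as ``the hard part'' is a genuine gap in the paper's argument, not a difficulty you created for yourself.

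Your repair is the right one: replace $F$ by the splitting field $N$ of $\prod_{i=1}^m(f_i(x)-r)$, which is a rational polynomial, so $N/\mathbb{Q}$ is Galois (incidentally this also repairs the paper's later appeal to Chebotar\"ev for the conjugacy class of $\sigma$ in $\gal(F/\mathbb{Q})$, which presumes $F/\mathbb{Q}$ Galois); then take for $\sigma$ an element of $\gal(N/L)$ acting as a $(t-1)$-cycle on the roots of every factor simultaneously, and run the remainder of the construction with its class. However, your sketch leaves one step unfinished, and it cannot be discharged the way you suggest: choosing $r$ in a Hilbert subset can only \emph{preserve} the generic Galois group of $\prod_{i=1}^m(f_i(x)-T)$ over $L(T)$ under specialization; it can never enlarge it. You must therefore prove, at the function-field level, that this generic group contains a diagonal $(t-1)$-cycle, e.g.\ by showing it is the full product of $m$ copies of $S_{t-1}$. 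This is believable --- since $f_i(x)-T=f_1(x)-(T-\theta_i+\theta_1)$, the covers $x\mapsto f_i(x)$ of the $T$-line are translates of one another and so have distinct branch loci for distinct $i$, which can be leveraged into linear disjointness of the corresponding splitting fields --- but it is an argument you must actually supply. As matters stand, both your proposal and the paper leave this crucial point unproven: you explicitly, the paper silently.
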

\begin{proof}
Take a $\mathbb{Q}$-automorphism of $L$ mapping $\theta_1$ to $\theta_i$, and extend it to an automorphism $\tau$ of $F$ mapping $E$ to itself.

As $f_1(x) - r$ splits over $F$, so does $f_i(x) - r = \tau(f_1(x) - r)$. Hence if $\sigma$ does not permute the roots of $f_i(x) - r$ cyclically, then $f_i(x) - r$ factors as $g_i(x)h_i(x)$ over $E$. It follows that $f_1(x) - r = \tau^{-1}(f_i(x) - r) = \tau^{-1}(g_i(x))\tau^{-1}(h_i(x))$, and so $f_1(x) - r$ also factors over $E$, a contradiction with the fact that $f_1(x) - r$ is irreducible over $E$.
\end{proof}

Now apply Theorem~\ref{thm:chebotarev} to the minimal polynomial of a primitive element in $F / \mathbb{Q}$ and the conjugacy class of $\sigma$ in $\gal(F/\mathbb{Q})$, to obtain a sequence of primes of positive density. Let $p$ be any prime in this sequence, and $\psi$ a place of $F$ over $p$. Then $\psi$ maps all the roots $\theta_i$ to elements of $\mathbb{F}_p$. By design, $\psi(f_1(x) - r)$ is an irreducible polynomial of degree $t-1$ over $\mathbb{F}_p$: adjoin some root to obtain $\mathbb{F}_{p^{t-1}}$. By Lemma~\ref{lem:irreduc}, all the polynomials $\psi(f_i(x) - r)$ are irreducible over $\mathbb{F}_p$, and split in $\mathbb{F}_{p^{t-1}}$. Let $\alpha_i$ be a root of the polynomial $\psi(f_i(x) - r), 1 \leq i \leq m$. Further let $\zeta$ be a $(t-2)$-root of unity, which must belong to $\mathbb{F}_p$ by construction.

Now define $A := \{(\zeta^k, 1) : 1 \leq k \leq t-2\} \cup \{(0, 1)\}$.

Also let $B := \{(-\alpha_i, \psi(\theta_i - r)) : 1 \leq i \leq m\}$.

Then $|A| = t-1, |B| = m$ and $A \cup B$ are vertices of $\mathcal{P}(p, t)$. We claim that they form a $K_{t-1, m}$ in $\mathcal{P}(p, t)$. To see this, note that for any $1 \leq k \leq t-2$ and $1 \leq i \leq m$,
\begin{equation*}
N_{\mathbb{F}_{p^{t-1}} / \mathbb{F}_p}(\zeta^k + (-\alpha_i)) = \psi(f_i-r)(\zeta^k) = \psi(\theta_i - r).
\end{equation*}
Furthermore,
\begin{equation*}
N_{\mathbb{F}_{p^{t-1}} / \mathbb{F}_p}(0 + (-\alpha_i)) = \psi(f_i-r)(0) = \psi(\theta_i - r)
\end{equation*}
as well. This shows that $A \cup B$ indeed defines a $K_{t-1, m}$, as desired.

\medskip
\textit{Acknowledgements.}
The question answered in this paper was raised during the Discrete Mathematics III seminar held at Freie Universit\"at Berlin in the winter of 2015. I would like to thank all the participants of the seminar for their motivating talks. 

\bibliographystyle{abbrv}
\bibliography{bibl}
\end{document}